\DeclareFontFamily{U}{euf}{}
\DeclareFontShape{U}{euf}{m}{n}{%
  <5><6><7><8><9>gen*eufm%
  <10><10.95><12><14.4><17.28><20.74><24.88>eufm10%
  }{}
\DeclareFontShape{U}{euf}{b}{n}{%
  <5><6><7><8><9>gen*eufb%
  <10><10.95><12><14.4><17.28><20.74><24.88>eufb10%
  }{}
\DeclareFontFamily{U}{msb}{}
\DeclareFontShape{U}{msb}{m}{n}{%
  <5><6><7><8><9>gen*msbm%
  <10><10.95><12><14.4><17.28><20.74><24.88>msbm10%
  }{}
\DeclareFontFamily{U}{msa}{}
\DeclareFontShape{U}{msa}{m}{n}{%
  <5><6><7><8><9>gen*msam%
  <10><10.95><12><14.4><17.28><20.74><24.88>msam10%
  }{}
\newtheorem{theorem}{Theorem}[section]
\newtheorem{lemma}[theorem]{Lemma}
\newtheorem{proposition}[theorem]{Proposition}
\theoremstyle{definition}
\newtheorem{remark}[theorem]{Remark}
\newtheorem{example}[theorem]{Example}
\numberwithin{equation}{section} \frenchspacing
\def\C{\mathbb C_p}
\def\BZ{\mathbb Z}
\def\Z{\mathbb Z_p}
\def\Q{\mathbb Q_p}
\def\C{\mathbb C_p}
\def\BZ{\mathbb Z}
\def\Z{\mathbb Z_p}
\def\Q{\mathbb Q_p}
\begin{document}

\title[]
{On reciprocity formula of Apostol-Dedekind sum with quasi-periodic Euler functions}

\author{Su Hu}
\address{Department of Mathematics, South China University of Technology, Guangzhou, Guangdong 510640, China}
\email{hus04@mails.tsinghua.edu.cn}

\author{Daeyeoul Kim}
\address{National Institute for Mathematical Sciences \\ Yuseong-daero 1689-gil \\ Yuseong-gu \\
Daejeon 305-811 \\ South Korea} \email{daeyeoul@nims.re.kr}

\author{Min-Soo Kim}
\address{Center for General Education, Kyungnam University,
7(Woryeong-dong) kyungnamdaehak-ro, Masanhappo-gu, Changwon-si, Gyeongsangnam-do 631-701, Republic of Korea
}
\email{mskim@kyungnam.ac.kr}

%\thanks{*Corresponding author}

\begin{abstract}
The Apostol-Dedekind sum
with quasi-periodic Euler functions is an analogue of Apostol's definition of the generalized Dedekind sum with periodic Bernoulli functions. In this paper, using  the Boole summation formula, we shall obtain the reciprocity formula for this sum.
\end{abstract}

\subjclass[2000]{11F20, 11B68, 65B15}
\keywords{Dedekind sum, Euler polynomial, Boole summation formula, Integral}

%\thanks{Received May 24, 2009}

\maketitle

\def\B{\overline B}
\def\E{\overline E}
\def\Z{\mathbb Z_p}

%%
%% Start line numbering here if you want
%%
% \linenumbers

%% main text

\def\ord{\text{ord}_p}
\def\C{\mathbb C_p}
\def\BZ{\mathbb Z}
\def\Z{\mathbb Z_p}
\def\Q{\mathbb Q_p}
\def\wh{\widehat}
\def\ov{\overline}

%\begin{proof}[Proof of Lemma 4]
%The result follows in the same way as in the proof of Proposition 1 in \cite{AD2}.
%\end{proof}

\section{Introduction}
\label{Intro}

The Euler  polynomials $E_k(x),k\in\mathbb N_0=\mathbb N\cup\{0\},$ are defined by the generating function
\begin{equation}\label{Eu-pol}
\frac{2e^{xt}}{e^t+1}=\sum_{k=0}^\infty E_k(x)\frac{t^k}{k!}.
\end{equation}

The integers $E_{k}=2^{k}E_{k}\left({1}/{2}\right),k\in\mathbb N_0,$ are called Euler numbers.
For example, $E_0=1,E_2=-1,E_4=5,$ and $E_6=-61.$
The Euler numbers and polynomials (so called by Scherk in 1825) appear in Euler's famous book,
Insitutiones Calculi Differentials (1755, pp.487--491 and p.522).
Notice that the Euler numbers with odd subscripts vanish, that is, $E_{2m+1}=0$ for all $m\in\mathbb N_0.$
The Euler polynomials can also be expressed in terms of the Euler numbers in the following way (see \cite[p.~25]{No}):
\begin{equation}\label{E-nu-pol-re}
E_k(x)=\sum_{i=0}^k\binom ki \frac{E_i}{2^i}\left(x-\frac12\right)^{k-i}
\end{equation}
which holds for all nonnegative integers $m$ and all real $x,$ and
which was obtained by Raabe \cite{Ra} in 1851.

Some properties of Euler polynomials can be easily derived from their generating functions,
for example, from (\ref{Eu-pol}), we have
\begin{equation}\label{id-1}
x^k=\frac12(E_k(x+1)+E_k(x))\quad\text{and}\quad E_k(1-x)=(-1)^kE_{k}(x)
\end{equation}
(also see \cite[p.~530, (23) and (24)]{SC}).

For further   properties of the Euler polynomials and numbers including their applications, we refer to
\cite{AS,Ba,GR,No,Ra,SC}.
It may be interesting to point out that there is also a connection between the generalized Euler numbers and the ideal class group of the $p^{n+1}$th cyclotomic field when $p$ is a prime number.
For details, we refer to a recent paper~\cite{HK-I}, especially~\cite[Proposition 3.4]{HK-I}.

The $k$-th quasi-periodic Euler function $\overline{E}_k(x)$ is defined by (see \cite[p.~661]{Ca3})
\begin{equation}\label{ae-ft}
\E_k(x+1)=-\E_k(x)
\end{equation}
for all $x,$ and
\begin{equation}\label{ae-ft-1}
\E_k(x)=E_k(x) \quad\text{for} \quad 0\leq x<1,
\end{equation}
where $E_k(x),k\in\mathbb N_0,$ denotes the $k$th Euler polynomials.
It can be shown that $\E_k(x)$ has continuous derivatives up to the $(k-1)$st order.
For $x\in\mathbb R$,  $[x]$ denotes the greatest integer not exceeding $x$
and $\{x\}$ denotes the fractional part of real number $x,$ thus \begin{equation}\label{fr-in}\{x\}=x-[x].\end{equation}
Then, for $r\in\mathbb Z$ and $k\in\mathbb N_0,$ we have (see \cite[(1.2.9)]{Ba} and \cite[(3.3)]{Ca3})
\begin{equation}\label{ae-ft-p}
\E_k(x)=(-1)^{[x]}E_k(\{x\}),\quad \E_k(x+r)=(-1)^r\E_k(x).
\end{equation}

For further properties of the quasi-periodic Euler functions, we refer to \cite{Ba,Ca3,KS}.

  Recall that, for $p\in\mathbb N_0$, the generalized Dedekind sum is defined by \begin{equation}\label{def-GD}
S_p(a,b)=\sum_{j=0}^{b-1}\B_p\left(\frac{aj}{b}\right)\B_1\left(\frac{{j}}{b}\right),
\end{equation}where $\B_p\left(x\right)$ is the $p$-th Bernoulli function defined by \begin{equation}\label{Bernoulli}\B_k(x)=B_{k}(\{x\})~\textrm{for}~k > 1~\textrm{and}~ \B_1(x)=((x)).\end{equation}
Here $B_{n}(x)$ denotes the $n$-th Bernoulli polynomial and $((x))$ denotes
$$((x))=\begin{cases}
x-[x]-\frac12 &\text{if } x\not\in \mathbb Z, \\
0 &\text{otherwise} \end{cases}$$
(see \cite{Ber2,CK,Har,Si,Sit,Ta}).
More than 60 years ago, Apostol~\cite{Ap} proved a reciprocity formula for this sum.

The Apostol-Dedekind sum $T_p(a,b)$
with quasi-periodic Euler functions is defined by
\begin{equation}\label{def-DS}
T_p(a,b)=2\sum_{j=0}^{b-1}(-1)^j\E_p\left(\frac{aj}{b}\right)\E_1\left(\frac{{j}}{b}\right),
\end{equation}
which is an analogue of the generalized Dedekind sums (\ref{def-GD}) for quasi-periodic Euler functions.

Notice that, as indicated by Carlitz in~\cite[p.661, 2nd paragraph]{Ca3}, the Bernoulli function are periodic, but the Euler functions are just quasi-periodic (also comparing with Eqs. (\ref{ae-ft-p}) and (\ref{Bernoulli}) above), thus the signs $(-1)^{j}$ in the definition of the Apostol-Dedekind sum $T_p(a,b)$
with quasi-periodic Euler functions (\ref{def-DS}) are necessary.

In this paper, we shall  prove the following reciprocity formula for the sum $T_p(a,b).$

\begin{theorem}[Reciprocity formula]\label{re-thm}
Let $a$ and $b$ be positive odd integers with $(a,b)=1.$
\begin{enumerate}
\item[(1)] For a even positive integer $p,$ we have
$$\begin{aligned}
ab^{p+1}T_p(a,b)+a^{p+1}bT_p(b,a)&=2E_{p+1}(0)-ab\sum_{k=0}^{p-1}\binom{p}{k}b^{k}E_{k}(0)a^{p-k}E_{p-k}(0) \\
&=2E_{p+1}(0)-ab(bE(0)+aE(0))^p~(\text{symbolically}).
\end{aligned}$$
\item[(2)] For an odd positive integer $p,$ we have
$$b^{p}T_p(a,b)-a^{p}T_p(b,a)=(a^p-b^p)E_p(0).$$
\end{enumerate}
\end{theorem}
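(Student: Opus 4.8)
The plan is to apply the Boole summation formula, in the form
$$\sum_{j=0}^{n-1}(-1)^jf(j)=\frac12\sum_{k=0}^{m-1}\frac{E_k(0)}{k!}\Bigl((-1)^{n-1}f^{(k)}(n)+f^{(k)}(0)\Bigr)+\frac1{2(m-1)!}\int_0^n f^{(m)}(x)\,\E_{m-1}(-x)\,dx,$$
to the function $f(x)=\E_p(ax/b)\,\E_1(x/b)$, which on $[0,b]$ coincides with $\E_p(ax/b)\bigl(\tfrac xb-\tfrac12\bigr)$, taking $n=b$; the left-hand side is then exactly $\tfrac12 T_p(a,b)$. Since $\E_p$ is of class $C^{p-1}$ while $\E_p^{(p)}$ is piecewise constant, jumping by $2(-1)^\ell p!$ at each integer $\ell$, I would choose $m=p+1$; then $f^{(p+1)}$ equals the piecewise-constant function $(p+1)!\,\dfrac{a^p}{b^{p+1}}\,\E_0(ax/b)$ together with finitely many Dirac masses located at the breakpoints $x=bk/a$, $k=0,1,\dots,a$, which carry the jumps of $\E_p^{(p)}(ax/b)$.

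First I would dispose of the boundary terms (note $(-1)^{b-1}=1$, as $b$ is odd). From $\E_p^{(k)}=\tfrac{p!}{(p-k)!}\E_{p-k}$ and Leibniz's rule, $f^{(k)}$ is a sum of a term proportional to $\E_{p-k}(ax/b)(x/b-\tfrac12)$ and one proportional to $\E_{p-k+1}(ax/b)$. Evaluating at $x=0$ via $\E_s(0)=E_s(0)$ and at $x=b$ via $\E_s(a^-)=(-1)^{a-1}E_s(1)=-E_s(0)$ for $s\ge1$ (here $a$ is odd), the second terms cancel in the sum $f^{(k)}(b)+f^{(k)}(0)=-\bigl(\tfrac ab\bigr)^{k}\tfrac{p!}{(p-k)!}E_{p-k}(0)$, and the $k=p$ summand disappears entirely. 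Hence the boundary part of Boole's formula is $-\tfrac12\sum_{k=0}^{p-1}\binom pk\bigl(\tfrac ab\bigr)^{k}E_k(0)E_{p-k}(0)$; after the eventual multiplication by $ab^{p+1}$ this becomes $-ab\sum_{k=0}^{p-1}\binom pk a^{k}b^{p-k}E_k(0)E_{p-k}(0)$, which, after reindexing $k\mapsto p-k$ and using that $E_p(0)=0$ for $p$ even (so the two extremal terms of the symbolic power are harmless), is precisely $-ab\,(bE(0)+aE(0))^p$ symbolically.

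Next comes the remainder integral $\tfrac1{2p!}\int_0^b f^{(p+1)}(x)\E_p(-x)\,dx$. Its smooth part is $\tfrac{p+1}{2}\,\tfrac{a^p}{b^{p+1}}\int_0^b\E_0(ax/b)\E_p(-x)\,dx$, and I would compute this integral by substituting $x=bt$, expanding $\E_0(at)$ and $\E_p(-bt)=(-1)^{p+1}\E_p(bt)$ (the reflection identity) by the Raabe multiplication formula $\E_s(cx)=c^s\sum_{r=0}^{c-1}(-1)^r\E_s(x+r/c)$ for odd $c$ — legitimate since $a,b$ are odd — and then applying, term by term, the elementary identity $\int_0^1\E_0(t+\alpha)\E_p(t+\beta)\,dt=\tfrac2{p+1}\E_{p+1}(1-\alpha+\beta)$; collapsing the resulting double sum back through the same multiplication formula and a permutation of residues modulo $b$ reduces it to $\sum_{r=0}^{b-1}(-1)^r\E_{p+1}(r/b)=b^{-(p+1)}E_{p+1}(0)$, so the smooth part contributes $(-1)^pE_{p+1}(0)/(ab^{p+1})$, which vanishes automatically when $p$ is odd. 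The singular part, after the reflection identity $\E_p(-bk/a)=(-1)^{p+1}\E_p(bk/a)$ and the observation that the weights attached to the two endpoint breakpoints $x=0,b$ cancel each other, reassembles into $\dfrac{(-1)^{p+1}a^p}{2b^p}\bigl(T_p(b,a)+E_p(0)\bigr)$, the extra $E_p(0)$ being exactly the $k=0$ term needed to complete the sum defining $T_p(b,a)$.

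Adding the three contributions yields
$$\tfrac12 T_p(a,b)=-\tfrac12\sum_{k=0}^{p-1}\binom pk\Bigl(\tfrac ab\Bigr)^{k}E_k(0)E_{p-k}(0)+\frac{(-1)^pE_{p+1}(0)}{ab^{p+1}}+\frac{(-1)^{p+1}a^p}{2b^p}\bigl(T_p(b,a)+E_p(0)\bigr),$$
and multiplying by $ab^{p+1}$ and separating the two cases finishes the proof: for $p$ even one has $(-1)^p=1$ and $E_p(0)=0$, giving (1); for $p$ odd one has $(-1)^{p+1}=1$ and $E_{p+1}(0)=0$, while the only surviving summand of $\sum_{k=0}^{p-1}\binom pk(a/b)^kE_k(0)E_{p-k}(0)$ is the $k=0$ term $E_0(0)E_p(0)=E_p(0)$, giving (2). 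I expect the sole real obstacle to be the bookkeeping: keeping all signs consistent simultaneously across Boole's formula, the Raabe multiplication formula, and the reflection identity $\E_p(-y)=(-1)^{p+1}\E_p(y)$, and correctly handling the breakpoints of $\E_p(ax/b)$ that sit at the endpoints $x=0,b$ of the summation range — which is legitimate through the one-sided values $f^{(p)}(0^+),f^{(p)}(b^-)$ precisely because $f,\dots,f^{(p-1)}$ are continuous there.
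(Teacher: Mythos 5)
Your route is genuinely different from the paper's. The paper applies Boole's formula to the auxiliary function $x\E_p(xy)$ twice --- once with $m=2$ for the pair $(a,b)$ and once with $m=p-1$ for the swapped pair $(b,a)$ --- and engineers the two leftover integrals $\int_0^1 x\E_{p-2}(ax)\E_1(bx)\,dx$ to be identical so that they cancel upon combination; the value of $\int_0^1\E_p(ax)\E_n(bx)\,dx$ (its Lemma 2.6) supplies the $E_{p+1}(0)$ term. You instead apply Boole once, directly to $f(x)=\E_p(ax/b)\E_1(x/b)$ with $m=p+1$, and extract $T_p(b,a)$ from the jump discontinuities of $\E_p^{(p)}(ax/b)$ at the points $x=bk/a$. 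I checked your bookkeeping: the boundary sum $-\frac12\sum_{k=0}^{p-1}\binom pk(a/b)^kE_k(0)E_{p-k}(0)$, the vanishing of the $k=p$ boundary term, the smooth contribution $E_{p+1}(0)/(ab^{p+1})$ (your extra $(-1)^p$ is harmless since $E_{p+1}(0)=0$ for $p$ odd), the identification of the interior-breakpoint sum with $\frac{a^p}{2b^p}(-1)^{p+1}(T_p(b,a)+E_p(0))$, and the final case split all reproduce the theorem, including the reindexing $k\mapsto p-k$ needed to match the stated form of (1).

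There is, however, one genuine gap: the Boole summation formula as you (and the paper, Lemma 2.5) state it requires $f^{(m)}$ to be \emph{absolutely integrable} on $[0,b]$, and with $m=p+1$ this hypothesis fails --- $f^{(p)}$ has jump discontinuities at $x=bk/a$, so $f^{(p+1)}$ is a measure, not a function. The entire $T_p(b,a)$ term, i.e.\ the heart of the reciprocity, is produced precisely by the Dirac masses, which is to say by the part of the argument not covered by the lemma. To make this rigorous you must stop Boole's formula at $m=p$ (where the hypothesis does hold) and then integrate the remainder $\frac1{(p-1)!}\int_0^b f^{(p)}(x)\E_{p-1}(-x)\,dx$ by parts once more, splitting $[0,b]$ at the points $bk/a$; the resulting boundary terms at the interior breakpoints are your ``masses,'' and at $x=0,b$ one must check that the one-sided contributions are exactly the $k=p$ boundary term you already discarded, with nothing double-counted. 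This is a finite computation and I believe it succeeds, but it is the step on which the whole proof rests and it is currently asserted rather than performed. (Secondarily, your identity $\int_0^1\E_0(t+\alpha)\E_p(t+\beta)\,dt=\frac2{p+1}\E_{p+1}(1-\alpha+\beta)$ is unproved; it would be cleaner to evaluate the smooth part by quoting the paper's Lemma 2.6 with first index $0$.)
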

\begin{remark}By applying the Euler-MacLaurin summation formula and its generalization for Bernoulli functions~\cite{Ben}, Da\u{g}l{\i} and Can \cite{DM2,DM} gave new proofs for the reciprocity formulas of the generalized Dedekind sums and the generalized Hardy-Berndt sums. In this note, we shall modify their methods to prove our results by using  the Boole summation formula.

The Boole summation formula is the Euler polynomial version of  the well-known Euler-MacLaurin summation formula (see Lemma \ref{BSF} below), as pointed out by N.E. N\"orlund (\cite{No}), this formula is also due to Euler.\end{remark}

\begin{remark}The proof of reciprocity formula for   another type of Dedekind sum was given in \cite[Theorem 9]{KT}
and  \cite[Theorem 13]{Si2}.
For a general treatment of reciprocity formula for the sum $T_p(a,b)$,
we refer to \cite[Theorem 1.1]{KS}.\end{remark}
In the last section, server examples will be shown.

\section{Proof of Theorem \ref{re-thm}}
We need the following lemmas.

\begin{lemma}[Fourier expansion, {\cite[p.~805, 23.1.16]{AS} and \cite[Lamma 5]{Si2}}]\label{ep-pro} We have
$$\E_p(x)=\frac{4p!}{\pi^{p+1}}\sum_{k=0}^\infty\frac{\sin((2k+1)\pi x-\frac12\pi p)}{(2k+1)^{p+1}},$$
where $0\leq x<1$ if $p\in\mathbb N$ and $0<x<1$ if $p=0.$
\end{lemma}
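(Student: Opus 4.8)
The plan is to derive the formula as the Fourier expansion of $\E_p$ with period $2$, computing the Fourier coefficients in closed form by means of the generating function (\ref{Eu-pol}). The anti-periodicity relation (\ref{ae-ft}) gives $\E_p(x+2)=\E_p(x)$, so $\E_p$ is $2$-periodic; moreover, as noted after (\ref{ae-ft-1}), it has continuous derivatives up to order $p-1$, so for $p\in\mathbb N$ it is continuous and piecewise smooth and admits a convergent Fourier series in the exponentials $e^{\pi i n x}$, $n\in\BZ$. I would therefore set $\E_p(x)=\sum_{n\in\BZ}c_n e^{\pi i n x}$ with $c_n=\frac12\int_0^2\E_p(x)e^{-\pi i n x}\,dx$ and compute the $c_n$ explicitly.

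First I would split the defining integral over $[0,1]$ and $[1,2]$ and substitute $x\mapsto x+1$ in the second piece; the anti-periodicity (\ref{ae-ft}) together with $e^{-\pi i n}=(-1)^n$ yields $c_n=\tfrac12\bigl(1-(-1)^n\bigr)\int_0^1 E_p(x)e^{-\pi i n x}\,dx$, using (\ref{ae-ft-1}) on $[0,1)$. Thus $c_n=0$ for even $n$ and only the odd harmonics survive; this is the Fourier-analytic counterpart of the remark following (\ref{def-DS}) that the sign factors $(-1)^j$ are forced by the quasi-periodicity of the Euler functions.

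Next I would evaluate the surviving coefficients using (\ref{Eu-pol}). Multiplying (\ref{Eu-pol}) by $e^{-\pi i n x}$ and integrating over $[0,1]$ gives
$$\sum_{p=0}^\infty\left(\int_0^1 E_p(x)e^{-\pi i n x}\,dx\right)\frac{t^p}{p!}=\frac{2}{e^t+1}\int_0^1 e^{(t-\pi i n)x}\,dx=\frac{2}{e^t+1}\cdot\frac{e^{t-\pi i n}-1}{t-\pi i n}.$$
For odd $n$ one has $e^{-\pi i n}=-1$, so the numerator collapses to $-(e^t+1)$ and the right-hand side becomes $\tfrac{2}{\pi i n-t}=\sum_{p\geq0}\tfrac{2}{(\pi i n)^{p+1}}t^p$; comparing coefficients of $t^p/p!$ shows $\int_0^1 E_p(x)e^{-\pi i n x}\,dx=\dfrac{2\,p!}{(\pi i n)^{p+1}}$. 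Hence $c_n=2\,p!/(\pi i n)^{p+1}$ for odd $n$ and $c_n=0$ otherwise.

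Finally I would assemble the real form. Pairing the terms $n=2k+1$ and $n=-(2k+1)$ and using $i^{-(p+1)}=e^{-i\pi(p+1)/2}$, the two exponentials combine into $\tfrac{4p!}{\pi^{p+1}(2k+1)^{p+1}}\cos\bigl((2k+1)\pi x-\tfrac{\pi(p+1)}{2}\bigr)$, and the identity $\cos(\theta-\tfrac\pi2)=\sin\theta$ converts the cosine into $\sin\bigl((2k+1)\pi x-\tfrac{\pi p}{2}\bigr)$, producing exactly the claimed series. The step needing the most care is the convergence and the pointwise identification of the series with $\E_p$, rather than the coefficient computation itself: for $p\in\mathbb N$ the coefficients decay like $(2k+1)^{-(p+1)}$ with $p+1\geq2$, so the series converges absolutely and uniformly to a continuous function, which then must equal $\E_p$ on the whole interval $[0,1)$, including $x=0$; for $p=0$ the decay is only like $(2k+1)^{-1}$, $\E_0$ is a step function, and Dirichlet's theorem gives convergence to $\E_0(x)$ only at points of continuity, which is precisely why the statement restricts $p=0$ to the open interval $0<x<1$. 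Checking these endpoint subtleties is the delicate part of the argument.
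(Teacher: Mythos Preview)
Your proof is correct and complete. The paper does not actually prove this lemma at all: it is stated with a citation to Abramowitz--Stegun \cite[p.~805, 23.1.16]{AS} and Simsek \cite[Lemma~5]{Si2} and then used without further justification. So there is no ``paper's own proof'' to compare against; you have supplied a self-contained argument where the authors simply invoked the literature.

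Your approach---computing the period-$2$ Fourier coefficients directly from the generating function~(\ref{Eu-pol}), observing that anti-periodicity kills the even harmonics, and then pairing conjugate terms---is the standard and natural one, and your treatment of the endpoint and convergence issues (absolute convergence for $p\geq1$ versus Dirichlet-type pointwise convergence for $p=0$, explaining the differing ranges of $x$) is exactly the care the statement calls for. One very minor remark: the generating-function step tacitly interchanges summation and integration, which is justified here since the power series in $t$ converges uniformly in $x\in[0,1]$ for $|t|<\pi$; you might make that explicit if you want the argument to be fully airtight.
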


\begin{lemma}[{\cite[p.~355, (1.2.13)]{Ba}}]\label{lem-1}
If $b$ is an odd positive integer, we have
$$
\sum_{j=0}^{b-1}(-1)^j\E_p\left(\frac{x+j}{b}\right)=b^{-p}\E_p(x)
$$
for $p\in\mathbb N_0$ and arbitrary real numbers $x.$
\end{lemma}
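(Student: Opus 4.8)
The plan is to substitute the Fourier expansion of Lemma~\ref{ep-pro} into the left-hand side and to collapse the resulting finite trigonometric sum by a geometric-series computation. The first point I would settle is that the expansion in Lemma~\ref{ep-pro}, although stated only for $0\le x<1$, in fact represents $\E_p$ at \emph{every} real argument. Replacing $x$ by $x+1$ in a summand sends $\sin((2k+1)\pi x-\tfrac12\pi p)$ to $\sin((2k+1)\pi x+(2k+1)\pi-\tfrac12\pi p)=-\sin((2k+1)\pi x-\tfrac12\pi p)$, so the series defines a function $f$ satisfying $f(x+1)=-f(x)$; since $\E_p$ obeys the same relation (\ref{ae-ft}) and the two agree on $[0,1)$, they agree on all of $\mathbb R$ for $p\in\mathbb N$, where the series converges absolutely to the continuous function $\E_p$. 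The borderline case $p=0$, in which $\E_0$ is merely piecewise constant and the series converges only conditionally, I would treat separately, either from the explicit formula $\E_0(y)=(-1)^{[y]}$ or by continuity off the integers.

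With this in hand I would insert the expansion for each $\E_p((x+j)/b)$ and interchange the finite sum over $j$ with the series over $k$, which is legitimate by absolute convergence when $p\ge1$. Writing $\phi=(2k+1)\pi/b$ and $\theta=(2k+1)\pi x/b-\tfrac12\pi p$, the inner sum becomes
$$\sum_{j=0}^{b-1}(-1)^j\sin(\theta+j\phi)=\operatorname{Im}\Bigl(e^{i\theta}\sum_{j=0}^{b-1}(-e^{i\phi})^j\Bigr)=\operatorname{Im}\Bigl(e^{i\theta}\,\frac{1-(-1)^b e^{ib\phi}}{1+e^{i\phi}}\Bigr).$$
Here the oddness of $b$ is decisive: since $b\phi=(2k+1)\pi$ gives $e^{ib\phi}=-1$ and $(-1)^b=-1$, the numerator $1-(-1)^b e^{ib\phi}=1+e^{ib\phi}$ vanishes, so the whole sum is $0$ unless the denominator $1+e^{i\phi}$ also vanishes. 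The denominator vanishes exactly when $e^{i\phi}=-1$, i.e. when $b\mid(2k+1)$, the quotient being automatically odd as a ratio of two odd numbers. In that exceptional case $-e^{i\phi}=1$, the geometric sum equals $b$, and the inner sum collapses to $b\sin\theta$.

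It then remains to collect the surviving terms. Writing $2k+1=b(2m+1)$ with $m\ge0$ reindexes the exceptional indices bijectively; then $\theta$ becomes $(2m+1)\pi x-\tfrac12\pi p$ and $(2k+1)^{p+1}=b^{p+1}(2m+1)^{p+1}$. Substituting back yields
$$\sum_{j=0}^{b-1}(-1)^j\E_p\Bigl(\frac{x+j}{b}\Bigr)=\frac{4p!}{\pi^{p+1}}\sum_{m=0}^{\infty}\frac{b\,\sin((2m+1)\pi x-\tfrac12\pi p)}{b^{p+1}(2m+1)^{p+1}}=b^{-p}\,\E_p(x),$$
the last equality again by Lemma~\ref{ep-pro}.

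I expect the main obstacle to be the analytic bookkeeping around the Fourier representation rather than the algebra. One must justify that the expansion is valid at the shifted arguments $(x+j)/b$ and at $x$ for \emph{arbitrary} real $x$ (handled by the quasi-periodicity matching above), and one must address the conditional convergence and the jump discontinuities in the case $p=0$. A fully elementary fallback that sidesteps these analytic points is to start from the classical multiplication formula $E_p(by)=b^{p}\sum_{j=0}^{b-1}(-1)^jE_p(y+\tfrac{j}{b})$ for odd $b$; setting $y=x/b$ gives the identity for $0\le x<1$ directly, and one then extends it to all real $x$ by verifying that both sides satisfy $f(x+1)=-f(x)$, using $(-1)^b=-1$ to absorb the boundary term at $j=b$.
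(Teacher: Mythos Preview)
Your argument is correct, but there is nothing in the paper to compare it against: the authors state Lemma~\ref{lem-1} with a bare citation to \cite[p.~355, (1.2.13)]{Ba} and give no proof of their own. So your Fourier-series computation is not a variant of the paper's argument but a self-contained proof where the paper has none.

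A few comments on the approach itself. The geometric-series step is clean and the use of the oddness of $b$ is exactly right: the numerator $1+e^{ib\phi}$ vanishes identically while the denominator $1+e^{i\phi}$ vanishes precisely when $b\mid 2k+1$, which filters out the correct subseries. Your handling of the extension of Lemma~\ref{ep-pro} to all of $\mathbb R$ via the matching quasi-periodicity is the right way to dispose of the domain restriction, and your separate treatment of $p=0$ (where the series is only conditionally convergent and $\E_0$ has jumps at the integers) is necessary and easy, e.g.\ directly from $\E_0(y)=(-1)^{[y]}$.

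Your fallback route --- the multiplication formula $E_p(by)=b^{p}\sum_{j=0}^{b-1}(-1)^jE_p(y+j/b)$ for odd $b$, followed by quasi-periodic extension --- is in fact the more standard derivation and is almost certainly what lies behind the citation to \cite{Ba}; it avoids all the analytic bookkeeping and handles $p=0$ uniformly. Either argument suffices.
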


\begin{proposition}
For even $a$  and odd $b,q$, we have
$$T_p(qa,qb)=q^{-1} T_p(a,b).$$
\end{proposition}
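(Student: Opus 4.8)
The plan is to expand $T_p(qa,qb)$ directly from the definition~(\ref{def-DS}) and split the summation index according to the factorization $qb$. Writing things out,
$$T_p(qa,qb)=2\sum_{j=0}^{qb-1}(-1)^j\E_p\left(\frac{qaj}{qb}\right)\E_1\left(\frac{j}{qb}\right)=2\sum_{j=0}^{qb-1}(-1)^j\E_p\left(\frac{aj}{b}\right)\E_1\left(\frac{j}{qb}\right).$$
First I would make the substitution $j=mb+r$ with $0\le m\le q-1$ and $0\le r\le b-1$, which is a bijection onto $\{0,1,\dots,qb-1\}$, and then simplify the three factors one at a time using the parity hypotheses. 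Since $b$ is odd, $(-1)^j=(-1)^{mb+r}=(-1)^m(-1)^r$. Since $\E_p$ is quasi-periodic by~(\ref{ae-ft-p}) and $a$ is even,
$$\E_p\left(\frac{a(mb+r)}{b}\right)=\E_p\left(am+\frac{ar}{b}\right)=(-1)^{am}\E_p\left(\frac{ar}{b}\right)=\E_p\left(\frac{ar}{b}\right).$$
Finally $\dfrac{j}{qb}=\dfrac{mb+r}{qb}=\dfrac{(r/b)+m}{q}$.

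After these reductions the double sum separates as
$$T_p(qa,qb)=2\sum_{r=0}^{b-1}(-1)^r\E_p\left(\frac{ar}{b}\right)\sum_{m=0}^{q-1}(-1)^m\E_1\left(\frac{(r/b)+m}{q}\right).$$
The crucial step is then to evaluate the inner sum over $m$ by applying Lemma~\ref{lem-1} with $p=1$ and $x=r/b$ — this is exactly where oddness of $q$ is needed — which gives
$$\sum_{m=0}^{q-1}(-1)^m\E_1\left(\frac{(r/b)+m}{q}\right)=q^{-1}\E_1\left(\frac{r}{b}\right).$$
Substituting back, the factor $q^{-1}$ pulls out of the sum over $r$, and what remains is $q^{-1}\cdot 2\sum_{r=0}^{b-1}(-1)^r\E_p\left(\frac{ar}{b}\right)\E_1\left(\frac{r}{b}\right)=q^{-1}T_p(a,b)$ by the definition~(\ref{def-DS}), which is the claim.

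I do not expect any deep obstacle here; the argument is essentially careful bookkeeping, and the main thing to watch is that all three parity hypotheses get used — oddness of $b$ to collapse $(-1)^{mb}$ to $(-1)^m$, evenness of $a$ to kill $(-1)^{am}$, and oddness of $q$ to invoke Lemma~\ref{lem-1}. One should also note in passing that $\E_1$ is defined at the integer/half-integer arguments that occur (e.g.\ $\E_1(0)=E_1(0)=-\tfrac12$), but since Lemma~\ref{lem-1} is stated for arbitrary real $x$ this causes no difficulty.
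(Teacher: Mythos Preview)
Your proof is correct and follows essentially the same route as the paper: both split the index via $j=mb+r$ (the paper writes $j=r+lb$), use the parity assumptions to reduce the factors exactly as you do, and then apply Lemma~\ref{lem-1} to the inner sum over $m$ with $x=r/b$ to extract the factor $q^{-1}$.
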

\begin{proof}
For even $a$  and odd $b,q$, by (\ref{ae-ft-p}) and Lemma \ref{lem-1}, we have
$$\begin{aligned}
T_p(qa,qb)&=2\sum_{j=0}^{qb-1}(-1)^j\E_p\left(\frac{qaj}{qb}\right)\E_1\left(\frac{j}{qb}\right) \\
&=2\sum_{r=0}^{b-1}\sum_{l=0}^{q-1}(-1)^l(-1)^r\E_p\left(\frac{ar}{b}+al\right)\E_1\left(\frac{r/b+l}{q}\right) \\
&=2\sum_{r=0}^{b-1}(-1)^r\E_p\left(\frac{ar}{b}\right)\sum_{l=0}^{q-1}(-1)^l\E_1\left(\frac{r/b+l}{q}\right) \\
&=q^{-1}T_p(a,b).
\end{aligned}$$
 This completes our proof.
\end{proof}

\begin{lemma}\label{lem-s3}
For odd positive  integers $a$ and $b$ with $(a,b)=1,$ we have
$$
\sum_{j=0}^{b-1}(-1)^j\E_p\left(\frac{x+aj}{b}\right)=b^{-p}\E_p(x)
$$
for $p\in\mathbb N_0$ and arbitrary real numbers $x.$
\end{lemma}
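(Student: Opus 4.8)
The plan is to reduce Lemma \ref{lem-s3} to Lemma \ref{lem-1} by exploiting that $a$ is invertible modulo $b$ together with the quasi-periodicity \eqref{ae-ft-p}. Since $(a,b)=1$, multiplication by $a$ permutes the residues $\{0,1,\dots,b-1\}$ modulo $b$; writing $aj \equiv \sigma(j) \pmod b$ for this permutation, we have $aj = \sigma(j) + b\,m_j$ for a unique integer $m_j$, so
$$
\E_p\!\left(\frac{x+aj}{b}\right) = \E_p\!\left(\frac{x+\sigma(j)}{b} + m_j\right) = (-1)^{m_j}\E_p\!\left(\frac{x+\sigma(j)}{b}\right),
$$
using the second identity in \eqref{ae-ft-p}. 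The main issue is bookkeeping of signs: I need to check that $(-1)^{j}(-1)^{m_j} = (-1)^{\sigma(j)}$, i.e. that $j + m_j \equiv \sigma(j) \pmod 2$.

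For this parity check I would use that $a$ and $b$ are both odd. From $aj = \sigma(j) + b m_j$, reduce modulo $2$: since $a$ is odd, $aj \equiv j \pmod 2$, and since $b$ is odd, $b m_j \equiv m_j \pmod 2$, hence $j \equiv \sigma(j) + m_j \pmod 2$, which is exactly the identity $(-1)^j(-1)^{m_j} = (-1)^{\sigma(j)}$ that I wanted. This is the one place where the hypothesis that both $a$ and $b$ are odd (rather than just coprime) is genuinely used, and it is the step I would be most careful about.

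Putting this together, the sum becomes
$$
\sum_{j=0}^{b-1}(-1)^j\E_p\!\left(\frac{x+aj}{b}\right) = \sum_{j=0}^{b-1}(-1)^{\sigma(j)}\E_p\!\left(\frac{x+\sigma(j)}{b}\right) = \sum_{r=0}^{b-1}(-1)^{r}\E_p\!\left(\frac{x+r}{b}\right),
$$
where in the last step I reindex by $r = \sigma(j)$, valid because $\sigma$ is a bijection of $\{0,1,\dots,b-1\}$. Now Lemma \ref{lem-1} applies directly (with its hypothesis that $b$ is odd) and gives the right-hand side $b^{-p}\E_p(x)$, completing the proof. The argument is essentially the same for all $p \in \mathbb{N}_0$ and all real $x$, so no case distinction is needed beyond invoking Lemma \ref{lem-1}.
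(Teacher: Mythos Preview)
Your proof is correct and follows essentially the same approach as the paper: writing $aj=\sigma(j)+bm_j$ is exactly the paper's decomposition $aj=\{aj\}_b+b\left[\frac{aj}{b}\right]$, and your parity check $j\equiv\sigma(j)+m_j\pmod 2$ is precisely the fact the paper states (but does not justify) that ``$j+\left[\frac{aj}{b}\right]$ and $j'$ have the same parity.'' If anything, your argument is slightly more explicit, since you spell out why both $a$ and $b$ being odd is needed for that parity step.
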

\begin{proof}
For $n\in\mathbb N_0,$  $\{n\}_N$ denotes by be the integer between 0 and $N-1$
which is congruent to $n$ modulo ${N}.$
Let $a$ and $b$ be odd positive integers with $(a,b)=1.$
Note that
$$aj\equiv\{aj\}_b\pmod{b}$$
and
$$\frac{\{aj\}_b}{b}=\frac{aj}{b}-\left[\frac{aj}{b}\right]\quad\text{($[\;]=$ greatest integer function)}.$$
Hence, by (\ref{ae-ft-p}) and Lemma \ref{lem-1}, we have
\begin{equation}\label{rab-form}
\begin{aligned}
\sum_{j=0}^{b-1}(-1)^j\E_p\left(\frac xb+\frac{aj}{b}\right)
&=\sum_{j=0}^{b-1}(-1)^j\E_p\left(\frac xb+\frac{\{aj\}_b}{b}+\left[\frac{aj}{b}\right]\right) \\
&=\sum_{j=0}^{b-1}(-1)^{j+\left[\frac{aj}{b}\right]}\E_p\left(\frac xb+\frac{\{aj\}_b}{b}\right) \\
&=\sum_{j'=0}^{b-1}(-1)^{j'}\E_p\left(\frac xb+\frac{j'}{b}\right) \\
&=b^{-p}\E_p(x),
\end{aligned}
\end{equation}
where the following fact has been used ``there exist unique $j'\in\{0,1,\ldots,b-1\}$ such that $j'=\{aj\}_b$ for each $j=0,1,\ldots,b-1,$
and $j+\left[\frac{aj}{b}\right],j'$ have the same parity."
\end{proof}

The following is the Boole summation formula (see, for example, \cite[24.17.1--2]{NIST}).

\begin{lemma}[Boole summation formula]\label{BSF}
Let $\alpha,\beta$ and $m$ be integers such that $\alpha<\beta$ and $0<m.$ If $f^{(m)}(x)$ is absolutely integrable over $[\alpha,\beta].$
Then
$$\begin{aligned}
2\sum_{j=\alpha}^{\beta-1}(-1)^jf(j)& = \sum_{k=0}^{m-1}\frac{E_k(0)}{k!}\left((-1)^{\beta-1}f^{(k)}(\beta)+(-1)^{\alpha}f^{(k)}(\alpha) \right) \\
&\quad+
\frac1{(m-1)!}\int_{\alpha}^{\beta}f^{(m)}(x)\E_{m-1}(-x)dx.
\end{aligned}$$
\end{lemma}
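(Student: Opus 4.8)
The plan is to prove the formula by induction on $m$, extracting one boundary term at each step via integration by parts; this mirrors the classical derivation of the Euler--Maclaurin formula, with the periodic Bernoulli functions replaced by the quasi-periodic Euler functions $\E_k$. First I would record the two structural facts to be used repeatedly. (i) The derivative relation $\E_k'(x)=k\,\E_{k-1}(x)$: differentiating the generating function (\ref{Eu-pol}) gives $E_k'(x)=kE_{k-1}(x)$, hence the same identity holds for $\E_k$ on each open unit interval, and by the $C^{k-1}$ regularity recalled just after (\ref{ae-ft-1}) it holds everywhere for $k\ge2$ and almost everywhere for $k=1$; in particular $\frac{d}{dx}\E_k(-x)=-k\,\E_{k-1}(-x)$, which drives each integration by parts. (ii) The values at integers: by (\ref{ae-ft-p}), $\E_k(-n)=(-1)^{n}E_k(0)$ for every $n\in\mathbb Z$ (using $(-1)^{-n}=(-1)^{n}$), and $E_0(0)=1$.

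For the base case $m=1$ I would evaluate the remainder integral directly. For $x\in(j,j+1)$ with $j\in\mathbb Z$ one has $[-x]=-j-1$, so $\E_0(-x)=(-1)^{j+1}$; splitting $\int_\alpha^\beta$ into unit subintervals gives
$$\int_{\alpha}^{\beta}f'(x)\E_0(-x)\,dx=\sum_{j=\alpha}^{\beta-1}(-1)^{j+1}\bigl(f(j+1)-f(j)\bigr),$$
and a telescoping reindexing rewrites the right side as $2\sum_{j=\alpha}^{\beta-1}(-1)^{j}f(j)+(-1)^{\beta}f(\beta)-(-1)^{\alpha}f(\alpha)$. Rearranging and using $(-1)^{\beta-1}=-(-1)^{\beta}$ together with $E_0(0)=1$ yields exactly the $m=1$ case.

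For the inductive step, set $R_k=\frac{1}{(k-1)!}\int_{\alpha}^{\beta}f^{(k)}(x)\E_{k-1}(-x)\,dx$ for $1\le k\le m$. Writing $\E_{k-1}(-x)=-\frac1k\frac{d}{dx}\E_k(-x)$ and integrating by parts — legitimate because $\E_k$ is continuous for $k\ge1$ while $f^{(k-1)}$ is absolutely continuous once $f^{(k)}$ is absolutely integrable — and then inserting the integer values from (ii) gives
$$R_k=\frac{E_k(0)}{k!}\Bigl((-1)^{\beta-1}f^{(k)}(\beta)+(-1)^{\alpha}f^{(k)}(\alpha)\Bigr)+R_{k+1}.$$
Substituting this into the identity established at stage $k$ promotes it to stage $k+1$; iterating from $k=1$ up to $k=m$ produces the stated formula. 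The only real obstacle is the bookkeeping at the boundary: $\E_k$ is only continuous for $k\ge1$ (it is a jump function for $k=0$), so the $k=0$ term must come from the direct telescoping computation rather than from an integration by parts, and one must confirm that each integration by parts is valid under the weak hypothesis that merely $f^{(m)}$ is absolutely integrable, so that $f^{(m-1)},\dots,f'$ are absolutely continuous and $d f^{(k)}(x)=f^{(k+1)}(x)\,dx$ applies.
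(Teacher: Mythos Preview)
Your argument is correct: the telescoping computation for $m=1$ and the integration-by-parts recursion $R_k=\dfrac{E_k(0)}{k!}\bigl((-1)^{\beta-1}f^{(k)}(\beta)+(-1)^{\alpha}f^{(k)}(\alpha)\bigr)+R_{k+1}$ are both valid, the regularity issues are handled properly (the $k=0$ step must be done by hand since $\E_0$ jumps, while for $k\ge1$ the function $\E_k$ is continuous and piecewise polynomial, hence absolutely continuous, so integration by parts applies), and the minor ``up to $k=m$'' should read ``for $k=1,\dots,m-1$, ending at $R_m$''.

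As for comparison with the paper: there is nothing to compare against. The paper does not prove Lemma~\ref{BSF}; it simply quotes the Boole summation formula with references to \cite[24.17.1--2]{NIST} and \cite{Boole}, remarking that N\"orlund attributes a version of it to Euler. Your induction-plus-integration-by-parts derivation is exactly the standard proof one would find behind those citations, so you have supplied what the paper omits.
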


The above formula is proved by Boole \cite{Boole}, but a similar one may be known  by Euler as well (see \cite{No}).

\begin{lemma}\label{lem-2}
For odd positive  integers $a$ and $b$ with $(a,b)=1$ and $p,n\in\mathbb N_0$, we have
$$
\int_0^1\E_p(ax)\E_{n}(bx)dx=\frac{2(-1)^{n+1}}{(n+1)\binom{p+n+1}{n+1}}\frac1{a^{n+1}b^{p+1}}E_{p+n+1}(0).
$$
\end{lemma}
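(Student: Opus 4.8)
The plan is to evaluate the integral $\int_0^1\E_p(ax)\E_n(bx)\,dx$ by repeated integration by parts, using the fundamental fact that the quasi-periodic Euler functions behave like the Euler polynomials under differentiation, namely $\frac{d}{dx}\E_k(x) = k\,\E_{k-1}(x)$ (valid since $\E_k$ has continuous derivatives up to order $k-1$, as noted after \eqref{ae-ft-1}), together with $\frac{d}{dx}\E_0(x)=0$ away from integers. First I would apply integration by parts $p$ times, differentiating the factor $\E_p(ax)$ down to a constant multiple of $\E_0(ax)$ and integrating $\E_n(bx)$ up to $\E_{n+p}(bx)$. At each step the boundary terms involve $\E_{n+1+j}(b\cdot 1)$ and $\E_{n+1+j}(b\cdot 0)$ for $j=0,\dots,p-1$; since $b$ is an odd integer, $\E_m(b) = (-1)^b\E_m(0) = -\E_m(0)$ by \eqref{ae-ft-p}, and $\E_m(0) = E_m(0)$, so the boundary contributions at $x=1$ and $x=0$ must be tracked carefully — this bookkeeping of signs and binomial/factorial coefficients is where the bulk of the routine computation lies.

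The cleaner route, which I would actually pursue, is to observe that after fully integrating down, the integral reduces (up to an explicit constant $C_{p,n}$ involving $a$, $b$ and a product of falling factorials) to $\int_0^1 \E_0(ax)\,\E_{n+p}(bx)\,dx$, i.e. essentially $\int_0^1 \E_{n+p}(bx)\,dx$ weighted by the sign pattern of $\E_0(ax)$. Since $a$ is odd, $\E_0(ax)$ over $[0,1]$ is a step function taking value $(-1)^{[ax]}$, and $\int_0^1 \E_{n+p}(bx)\,dx = \tfrac1b\int_0^b \E_{n+p}(u)\,du$, which telescopes via \eqref{ae-ft-p} to a single term $\tfrac1{b(n+p+1)}\bigl(\E_{n+p+1}(0) - \E_{n+p+1}(b)\bigr) = \tfrac{2}{b(n+p+1)}E_{n+p+1}(0)$ using $\E_{n+p+1}(b) = -E_{n+p+1}(0)$. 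Collecting the constant from the $p$-fold integration by parts, which is $\dfrac{p!\,n!}{(n+p)!} = \dfrac{1}{\binom{p+n}{p}}$ times the appropriate power $a^0/b^{?}$, and combining with the factor $\tfrac{2}{n+p+1}$ from the last integral, gives exactly $\dfrac{2(-1)^{n+1}}{(n+1)\binom{p+n+1}{n+1}}\cdot\dfrac{1}{a^{n+1}b^{p+1}}E_{p+n+1}(0)$ after simplifying the binomial identity $(n+1)\binom{p+n+1}{n+1} = (n+p+1)\binom{p+n}{p}$.

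An alternative and perhaps more symmetric approach is to substitute the Fourier expansion of Lemma \ref{ep-pro} for both $\E_p(ax)$ and $\E_n(bx)$ and integrate term by term, using orthogonality-type relations for the functions $\sin((2k+1)\pi ax - \tfrac12\pi p)$ on $[0,1]$; because $a$ and $b$ are coprime odd integers the cross terms $\int_0^1 \sin((2k+1)\pi a x - \cdots)\sin((2\ell+1)\pi b x - \cdots)\,dx$ vanish unless $(2k+1)a = (2\ell+1)b$, which forces $2k+1 = b m$, $2\ell+1 = a m$ for odd $m$, and the surviving sum over $m$ is an explicit series summing to a value of $E_{p+n+1}(0)$ via Lemma \ref{ep-pro} again. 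I expect the main obstacle in the integration-by-parts route to be the careful handling of the $x=1$ boundary terms — one must consistently use $\E_m(1^-) = E_m(1) = (-1)^m E_m(0)$ versus the quasi-periodic value, and keep the alternating signs from successive integrations straight — while in the Fourier route the obstacle is justifying term-by-term integration and identifying the resulting numerical series; I would go with integration by parts as the more elementary and self-contained option.
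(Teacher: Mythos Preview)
Your integration-by-parts reduction to $\int_0^1 \E_0(ax)\,\E_{n+p}(bx)\,dx$ is fine, and in fact cleaner than you suggest: since $a,b$ are odd, each boundary term $\E_{p-j}(a)\E_{n+1+j}(b)-\E_{p-j}(0)\E_{n+1+j}(0)$ vanishes identically by \eqref{ae-ft-p}, so there is no bookkeeping to do. The genuine gap is the evaluation of the terminal integral. You write that it is ``essentially $\int_0^1\E_{n+p}(bx)\,dx$ weighted by the sign pattern of $\E_0(ax)$'' and then compute only the unweighted integral $\int_0^1\E_{n+p}(bx)\,dx=\tfrac{-2}{b(n+p+1)}E_{n+p+1}(0)$. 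But the weight $(-1)^{[ax]}$ cannot be dropped: it is responsible for the entire factor $a^{-(n+p+1)}$, and without it your final answer (after multiplying by the IBP constant $(-a/b)^p p!\,n!/(n+p)!$, not $a^0/b^{?}$) is off by exactly $a^{n+p+1}$. The correct evaluation is obtained by splitting $[0,1]$ into the $a$ subintervals $[j/a,(j+1)/a]$, integrating $\E_{n+p}(bx)$ on each, and recognising the resulting alternating sum $\sum_{j=0}^{a-1}(-1)^j\E_{n+p+1}(bj/a)$ as $a^{-(n+p+1)}E_{n+p+1}(0)$ via Lemma~\ref{lem-s3}. So the Raabe-type identity is indispensable here, not a side issue.

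Once this is fixed, your argument is really a repackaging of the paper's proof rather than an alternative to it. The paper applies the Boole summation formula (Lemma~\ref{BSF}) to $f(x)=\E_p(xy)$ on $[0,b]$ with $y=a/b$; the boundary terms again cancel because $a,b$ are odd, and what remains is precisely an identity expressing $\int_0^1\E_{p-m}(ax)\E_{m-1}(bx)\,dx$ in terms of the sum $\sum_{j=0}^{b-1}(-1)^j\E_p(aj/b)$, which is then evaluated by Lemma~\ref{lem-s3}. Boole summation is the discrete analogue of your repeated integration by parts, so both routes hinge on the same two ingredients: vanishing boundary contributions from the odd-parity of $a,b$, and Lemma~\ref{lem-s3} to close the computation. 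Your Fourier alternative would be a genuinely different proof, but as you note it requires justifying termwise integration and the resulting coprimality argument; the paper does not pursue it.
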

\begin{proof}
First, we consider the function $f(x)=\E_p(xy),$ where $y\in\mathbb R.$
The property
\begin{equation}\label{part-eft}
\frac{\text{d}}{\text{d}x}(E_{p}(x))=pE_{p-1}(x), \quad p\in\mathbb N
\end{equation}
implies
\begin{equation}\label{part-eft-1}
\frac{\text{d}^j}{\text{d}x^j}(f(x))=\frac{\text{d}^j}{\text{d}x^j}(\E_p(xy))=y^j\frac{p!}{(p-j)!}\E_{p-j}(xy)
\end{equation}
for $1\leq j\leq p.$ Set $\alpha=0$ and $\beta=b$ in Lemma \ref{BSF}. From (\ref{part-eft-1}), Lemma \ref{BSF} can be written as
\begin{equation}\label{part-eft-2}
\begin{aligned}
2\sum_{j=0}^{b-1}(-1)^j\E_p(jy)& = \sum_{k=0}^{m-1}E_k(0)y^k\binom pk\left((-1)^{b-1}\E_{p-k}(by)
+\E_{p-k}(0) \right) \\
&\quad+
m\binom pm y^m\int_{0}^{b}\E_{p-m}(xy)\E_{m-1}(-x)dx.
\end{aligned}
\end{equation}
Letting $a,b$ be odd positive  integers with $(a,b)=1,$ and $y=a/b$ in (\ref{part-eft-2}).
Since $\E_{p-k}(a)=(-1)^a\E_{p-k}(0)=-E_{p-k}(0),$ we obtain
\begin{equation}\label{part-eft-3}
\begin{aligned}
2\sum_{j=0}^{b-1}(-1)^j\E_p\left(\frac{aj}{b}\right)=
m\left(\frac ab\right)^m \binom pm\int_{0}^{b}\E_{p-m}\left(\frac{ax}{b}\right)\E_{m-1}(-x)dx.
\end{aligned}
\end{equation}
Let $k=0$ in (\ref{ae-ft-1}), we obtain $$\lim_{x\rightarrow0^+}\E_0(1-x)=1=\lim_{x\rightarrow0^+}(-1)^0\E_0(x).$$
By Lemma \ref{ep-pro}, we have $\E_k(1-x)=(-1)^k\E_k(x)$, for $k\in\mathbb N.$
Thus, $\E_k(1-x)=(-1)^k\E_k(x)$, for $k\in\mathbb N_{0}.$
So, for $m\in\mathbb N,$ we have
\begin{equation}\label{part-eft-4}
\begin{aligned}
\int_{0}^{b}\E_{p-m}\left(\frac{ax}{b}\right)\E_{m-1}(-x)dx
&=-\int_{0}^{b}\E_{p-m}\left(\frac{ax}{b}\right)\E_{m-1}(1-x)dx \\
&=(-1)^m\int_{0}^{b}\E_{p-m}\left(\frac{ax}{b}\right)\E_{m-1}(x)dx
\end{aligned}
\end{equation}
by (\ref{ae-ft}).
Letting $x=bt$ in (\ref{part-eft-4}), we obatin
\begin{equation}\label{part-eft-5}
\begin{aligned}
\int_{0}^{b}\E_{p-m}\left(\frac{ax}{b}\right)\E_{m-1}(-x)dx
=(-1)^mb\int_0^1\E_{p-m}(at)\E_{m-1}(bt)dt.
\end{aligned}
\end{equation}
Substituting the above  into (\ref{part-eft-3}), we have
\begin{equation}\label{part-eft-6}
\begin{aligned}
\int_0^1\E_{p-m}(ax)\E_{m-1}(bx)dx=\frac{2(-1)^m}{m\binom pm}\frac{b^{m-1}}{a^m } \sum_{j=0}^{b-1}(-1)^j\E_p\left(\frac{aj}{b}\right).
\end{aligned}
\end{equation}
Therefore, using Lemma \ref{lem-s3} with $x=0,$ (\ref{part-eft-6}) can be written as
\begin{equation}\label{part-eft-7}
\int_0^1\E_{p-m}(ax)\E_{m-1}(bx)dx=\frac{2(-1)^m}{m\binom pm}\frac{b^{m-p-1}}{a^m }E_p(0).
\end{equation}
Setting $p=p+m$ in (\ref{part-eft-7}), we obtain the desired result.
\end{proof}

\begin{proof}[Proof of Theorem \ref{re-thm}]
Set $f(x)=x\E_p(xy),$ where $y\in\mathbb R.$

From (\ref{part-eft}) and Leibniz's rule for
the derivative, we have
\begin{equation}\label{re-pf-1}
\frac{\text{d}^j}{\text{d}x^j}(f(x))=xy^j\frac{p!}{(p-j)!}\E_{p-j}(xy)+y^{j-1}j\frac{p!}{(p+1-j)!}\E_{p+1-j}(xy),
\end{equation}
for $1\leq j\leq p,$

Set $\alpha=0$ and $\beta=b$ in Lemma \ref{BSF}.
Let $a,b$ be positive odd integers with $(a,b)=1.$
From (\ref{re-pf-1}) with $y=a/b,$ Lemma \ref{BSF} can be written as
\begin{equation}\label{re-pf-2}
\begin{aligned}
2\sum_{j=0}^{b-1}(-1)^jj\E_p\left(\frac{aj}{b}\right)& = \sum_{k=0}^{m-1}E_k(0)
\biggl[\left(\frac ab\right)^{k}\binom pk b\E_{p-k}(a) \\
&\quad
+\left(\frac ab\right)^{k-1}\binom{p+1}{k}\frac{k}{p+1}\E_{p+1-k}(a) \\
&\quad+\left(\frac ab\right)^{k-1}\binom{p+1}{k}\frac{k}{p+1}\E_{p+1-k}(0)
\biggl] \\
&\quad
+\int_{0}^{b}
\biggl[\left(\frac ab\right)^{m}\binom pm mx\E_{p-m}\left(\frac{ax}b\right) \\
&\quad
+\left(\frac ab\right)^{m-1}\binom{p}{m-1}m\E_{p+1-m}\left(\frac{ax}b\right)\biggl]
\E_{m-1}(-x)dx.
\end{aligned}
\end{equation}
Notice that $\E_{p+1-k}(a)=(-1)^a\E_{p+1-k}(0)=-E_{p+1-k}(0),$
(\ref{re-pf-2}) becomes
\begin{equation}\label{re-pf-3}
\begin{aligned}
2\sum_{j=0}^{b-1}(-1)^jj\E_p\left(\frac{aj}{b}\right)& = -b\sum_{k=0}^{m-1}\binom pk
\left(\frac ab\right)^{k}E_k(0)E_{p-k}(0) \\
&\quad+\left(\frac ab\right)^{m}\binom pm m\int_{0}^{b}
x\E_{p-m}\left(\frac{ax}b\right)\E_{m-1}(-x)dx\\
&\quad+\left(\frac ab\right)^{m-1}\binom{p}{m-1}m\int_{0}^{b}
\E_{p+1-m}\left(\frac{ax}b\right)\E_{m-1}(-x)dx.
\end{aligned}
\end{equation}
From Lemma \ref{lem-2}, the second integral term in (\ref{re-pf-3}) can be written as
\begin{equation}\label{re-pf-4}
\begin{aligned}
\int_{0}^{b}\E_{p+1-m}\left(\frac{ax}{b}\right)\E_{m-1}(-x)dx
&=(-1)^mb\int_0^1\E_{p+1-m}(ax)\E_{m-1}(bx)dx \\
&=\frac{2}{m \binom{p+1}m}\frac{b^{m-p-1}}{a^m}E_{p+1}(0).
\end{aligned}
\end{equation}
We also note that
\begin{equation}\label{re-pf-4-1}
\begin{aligned}
\int_{0}^{b} x\E_{p-m}\left(\frac{ax}b\right)\E_{m-1}(-x)dx
&=(-1)^mb^2\int_{0}^1 x\E_{p-m}(ax)\E_{m-1}(bx)dx.
\end{aligned}
\end{equation}
Therefore, by (\ref{re-pf-4}) and (\ref{re-pf-4-1}), (\ref{re-pf-3}) can be written as
\begin{equation}\label{re-pf-5}
\begin{aligned}
2\sum_{j=0}^{b-1}(-1)^jj\E_p\left(\frac{aj}{b}\right)& = -b\sum_{k=0}^{m-1}\binom pk
\left(\frac ab\right)^{k}E_k(0)E_{p-k}(0) \\
&\quad+\frac{2m}{(p+1)ab^p}E_{p+1}(0) \\
&\quad+\frac{(-1)^m m a^m}{b^{m-2}}\binom pm
\int_{0}^1 x\E_{p-m}(ax)\E_{m-1}(bx)dx.
\end{aligned}
\end{equation}
Let $m=2$ in (\ref{re-pf-5}), since $E_1(0)=-1/2,$ we obtain
\begin{equation}\label{re-pf-6}
\begin{aligned}
2\sum_{j=0}^{b-1}(-1)^jj\E_p\left(\frac{aj}{b}\right)& = -b\left(E_p(0)-\frac{ap}{2b}E_{p-1}(0)\right)+\frac{4}{(p+1)ab^p}E_{p+1}(0) \\
&\quad+{2 a^2}\binom p2
\int_{0}^1 x\E_{p-2}(ax)\E_{1}(bx)dx.
\end{aligned}
\end{equation}
On the other hand, setting $m=p-1,$ and interchanging $a$ and $b$ in (\ref{re-pf-5}), we obtain
\begin{equation}\label{re-pf-7}
\begin{aligned}
2\sum_{j=0}^{a-1}(-1)^jj\E_p\left(\frac{bj}{a}\right)& = -a\sum_{k=0}^{p-2}\binom pk
\left(\frac ba\right)^{k}E_k(0)E_{p-k}(0) \\
&\quad+\frac{2(p-1)}{(p+1)a^pb}E_{p+1}(0) \\
&\quad+\frac{(-1)^{p-1} (p-1) b^{p-1}}{a^{p-3}}\binom p{p-1}
\int_{0}^1 x\E_{1}(bx)\E_{p-2}(ax)dx.
\end{aligned}
\end{equation}
Particularly, since $\E_1(x)=x-1/2$ for $0<x<1,$ we have
\begin{equation}\label{pf-def}
T_p(a,b)=\frac2b\sum_{j=0}^{b-1}(-1)^jj\E_p\left(\frac{aj}{b}\right)-\sum_{j=0}^{b-1}(-1)^j\E_p\left(\frac{aj}{b}\right).
\end{equation}
Thus, with the help of Lemma \ref{lem-s3}, we have
\begin{equation}\label{pf-def-1}
T_p(a,b)=\frac2b\sum_{j=0}^{b-1}(-1)^jj\E_p\left(\frac{aj}{b}\right)-b^{-p}E_p(0).
\end{equation}

For Part (1),
if $p$ is a positive even integer, then form (\ref{def-DS}), (\ref{re-pf-6}), (\ref{re-pf-7}) and (\ref{pf-def-1}),
we obtain the reciprocity formula
$$\begin{aligned}
ab^{p+1}T_p(a,b)+a^{p+1}bT_p(b,a)&=\frac{a^2b^pp}{2}E_{p-1}(0)+2E_{p+1}(0) \\
&\quad-a^{p+1}b\sum_{k=0}^{p-2}\binom{p}{k}\left(\frac ba\right)^{k}E_{k}(0)E_{p-k}(0) \\
&=2E_{p+1}(0)-ab\sum_{k=0}^{p-1}\binom{p}{k}b^{k}E_{k}(0)a^{p-k}E_{p-k}(0),
\end{aligned}$$
which may be written symbolically in the form
$$ab^{p+1}T_p(a,b)+a^{p+1}bT_p(b,a)=2E_{p+1}(0)-ab(bE(0)+aE(0))^p,$$
since $E_1(0)=-1/2$, and $E_p(0)=0$ if $p$ is even. Thus, we have Part (1).

When $p=1,$  consider the sum
\begin{equation}\label{def-DS-p=1}
T_1(a,b)=2\sum_{j=0}^{b-1}(-1)^j\E_1\left(\frac{aj}{b}\right)\E_1\left(\frac{j}{b}\right).
\end{equation}
Set $m=p=1$ in (\ref{re-pf-5}). Since $E_2(0)=0,$ we have
\begin{equation}\label{pf-p=1}
2\sum_{j=0}^{b-1}(-1)^j j\E_1\left(\frac{aj}{b}\right)=-bE_1(0)-ab\int_0^1 x\E_0(ax)\E_0(bx)dx,
\end{equation}
thus from (\ref{pf-def-1}) with $p=1$, we obtain
\begin{equation}\label{pf-p=1-2}
T_1(a,b)=-E_1(0)-\frac1b E_1(0)-a\int_0^1 x\E_0(ax)\E_0(bx)dx.
\end{equation}
Also setting $m=p=1,$ and interchanging $a$ and $b$ in (\ref{re-pf-5}), we have:
\begin{equation}\label{pf-p=1-3}
2\sum_{j=0}^{a-1}(-1)^j j\E_1\left(\frac{bj}{a}\right)=-aE_1(0)-ba\int_0^1 x\E_0(bx)\E_0(ax)dx,
\end{equation}
thus from (\ref{pf-def-1}) with $p=1$, we obtain
\begin{equation}\label{pf-p=1-4}
T_1(b,a)=-E_1(0)-\frac1a E_1(0)-b\int_0^1 x\E_0(ax)\E_0(bx)dx.
\end{equation}
Therefore, by (\ref{pf-p=1-2}) and (\ref{pf-p=1-4}), we have
\begin{equation}\label{pf-p=1-5}
bT_1(a,b)-aT_1(b,a)=(a-b)E_1(0).
\end{equation}

Note that, since $E_k(0)=0$ for even $k$, we obtain
\begin{equation}\label{re-pf-8}
\begin{aligned}
\sum_{k=0}^{p-2}\binom{p}{k}\left(\frac ba\right)^{k}E_{k}(0)E_{p-k}(0)=E_p(0) \quad\text{if odd }p>2.
\end{aligned}
\end{equation}
If $p$ is a positive odd integer with $p>2,$ then form (\ref{def-DS}), (\ref{re-pf-6}), (\ref{re-pf-7}), (\ref{pf-def-1}) and (\ref{re-pf-8}),
we obtain the reciprocity formula
$$b^{p}T_p(a,b)-a^{p}T_p(b,a)=(a^p-b^p)E_p(0).$$
This completes the proof of Part (2) for odd integer $p\geq1.$
\end{proof}

\begin{remark}
Let $a$ and $b$ be positive odd integers with $(a,b)=1.$
From (\ref{ae-ft-p}), (\ref{fr-in}) and Lemma \ref{lem-s3}, we have
$$
\begin{aligned}
\sum_{j=0}^{b-1}(-1)^{j+\left[\frac{aj}{b}\right]}
&=\sum_{j=0}^{b-1}(-1)^j(-1)^{\left[\frac{aj}{b}\right]}\E_0\left(\left\{ \frac{aj}{b} \right\}\right) \\
&=\sum_{j=0}^{b-1}(-1)^j\E_0\left(\left\{ \frac{aj}{b} \right\}+\left[\frac{aj}{b}\right]\right) \\
&=\sum_{j=0}^{b-1}(-1)^j\E_0\left(\frac{aj}{b}\right) \\
&=E_0(0)=1.
\end{aligned}
$$
Thus, let $\varrho(a,b)=\sum_{j=0}^{b-1}(-1)^{j+\left[\frac{aj}{b}\right]},$ we have
$$\varrho(a,b)+\varrho(b,a)=2$$
(also see  \cite[Theorem 4.2]{Ber3}).
\end{remark}

\begin{remark}
Let $I=\int_0^1 (-1)^{[ax]+[bx]}dx,$ where $a$ and $b$ are positive odd integers with $(a,b)=1.$
From Lemma \ref{lem-2}, we have
$$I=\int_0^1 (-1)^{[ax]+[bx]}dx=\int_0^1 \E_0(ax)\E_0(bx)dx=\frac1{ab},$$
for $(a,b)=1$.
This integral can be calculated by Stieltjes integrals.
For $(a,b)=1,$ we see that
$$I=\sum_{j=0}^{a-1}\int_{j/a}^{(j+1)/a} \E_0(ax)\E_0(bx)dx.$$
Thus by substituting  $x=y/a+j/a,$ and $dx=dy/a$, we get
$$
\begin{aligned}
I&=\frac1a\sum_{j=0}^{a-1}\int_{0}^{1} \E_0(y+j)\E_0\left(\frac{by}{a}+\frac{bj}{a}\right)dy  \\
&=\frac1a\sum_{j=0}^{a-1}(-1)^j\int_{0}^{1} \E_0(y)\E_0\left(\frac{by}{a}+\frac{bj}{a}\right)dy \\
&=\frac1a\int_{0}^{1}\sum_{j=0}^{a-1}(-1)^j\E_0\left(\frac{by}{a}+\frac{bj}{a}\right)dy \\
&=\frac1a\int_{0}^{1}\E_0(by)dy.
\end{aligned}
$$
Here we use Lemma \ref{lem-s3}. Repeating the procedure on $b,$ we obtain
$$\int_{0}^{1}\E_0(by)dy=\frac1b$$
and $I=1/ab$ as claimed.
\end{remark}

\begin{remark}
From (\ref{def-DS-p=1}), we have $T_1(a,1)=1/2,$ so when $b=1,$ (\ref{pf-p=1-2}) may be written in the form
$$\frac12=1-a\int_0^1 x\E_0(ax)dx,$$
since $E_1(0)=-1/2.$ An immediate consequence of this formula is
\begin{equation}\label{x-int}
\int_0^1 (-1)^{[ax]}xdx=\int_0^1 x\E_0(ax)dx=\frac1{2a}.
\end{equation}
On the other hand, from (\ref{pf-p=1-2}), we see that
$$
\begin{aligned}
T_1(1,a)&=\frac12+\frac1{2a}-\int_0^1 x\E_0(x)\E_0(ax)dx \\
&=\frac12+\frac1{2a}-\int_0^1 x\E_0(ax)dx \\
&=\frac12,
\end{aligned}
$$
where (\ref{x-int}) has been used. Therefore, we obtain
$T_1(a,1)-aT_1(1,a)=(a-1)E_1(0)$
(also see Theorem \ref{re-thm}(2) above).
\end{remark}

\section{Some examples}

We have the following concrete examples of the reciprocity formula (Theorem \ref{re-thm}).

\begin{example}
Set $a=5$ and $b=3$ in (\ref{def-DS}), by (\ref{ae-ft-p}), we have
\begin{equation}\label{ex2}
\begin{aligned}
T_p(5,3)&=2\left(\E_p\left(0\right)\E_1\left(0\right) -\E_p\left(\frac{5}{3}\right)\E_1\left(\frac{1}{3}\right)+\E_p\left(\frac{10}{3}\right)\E_1\left(\frac{2}{3}\right)\right) \\
&=2\left(E_p\left(0\right)E_1\left(0\right) + E_p\left(\frac{2}{3}\right)E_1\left(\frac{1}{3}\right)-E_p\left(\frac{1}{3}\right)E_1\left(\frac{2}{3}\right)\right)
\end{aligned}
\end{equation}
and
\begin{equation}\label{ex1}
\begin{aligned}
T_p(3,5)&=2\biggl(\E_p\left(0\right)\E_1\left(0\right) -\E_p\left(\frac{3}{5}\right)\E_1\left(\frac{1}{5}\right)+\E_p\left(\frac{6}{5}\right)\E_1\left(\frac{2}{5}\right)\\
&\quad-\E_p\left(\frac{9}{5}\right)\E_1\left(\frac{3}{5}\right)
+\E_p\left(\frac{12}{5}\right)\E_1\left(\frac{4}{5}\right)\biggl) \\
&=2\biggl(E_p\left(0\right)E_1\left(0\right)  -E_p\left(\frac{3}{5}\right)E_1\left(\frac{1}{5}\right)-E_p\left(\frac{1}{5}\right)\E_1\left(\frac{2}{5}\right)
 \\
&\quad+E_p\left(\frac{4}{5}\right)\E_1\left(\frac{3}{5}\right)+E_p\left(\frac{2}{5}\right)\E_1\left(\frac{4}{5}\right)\biggl).
\end{aligned}
\end{equation}

Case (I): $p=1.$ Letting $p=1$ in (\ref{ex2}) and (\ref{ex1}), we have
$$T_1(5,3)=\frac{1}{2},\quad T_1(3,5)=\frac{1}{2},$$
so the left-hand side of Theorem \ref{re-thm}(2) reduces to
\begin{equation}\label{ex5-I}
\begin{aligned}
3T_1(5,3)-5 T_1(3,5)=-1
\end{aligned}
\end{equation}
and if $p=1$, then the right-hand side of Theorem \ref{re-thm}(2) equals to
\begin{equation}\label{ex6-I}
\begin{aligned}
(5-3)E_1(0)=-1.
\end{aligned}
\end{equation}
Therefore, (\ref{ex5-I}) and (\ref{ex6-I}) yield the result of Theorem \ref{re-thm}(2) when $p=1,$ $a=5$ and $b=3.$

Case (II): $p=2.$ Letting $p=2$ in (\ref{ex2}) and (\ref{ex1}), we have
$$T_2(5,3)=\frac{4}{27},\quad T_2(3,5)=-\frac{44}{125},$$
so the left-hand side of Theorem \ref{re-thm}(1) reduces to
\begin{equation}\label{ex3-II}
\begin{aligned}
5\cdot3^3T_2(5,3)+5^3\cdot3 T_2(3,5)=-112
\end{aligned}
\end{equation}
and if $p=2$, then the right-hand side of Theorem \ref{re-thm}(1) equals to
\begin{equation}\label{ex4-II}
\begin{aligned}
2E_3(0)-&5\cdot3\sum_{k=0}^1\binom 2k5^kE_k(0)3^{2-k}E_{2-k}(0) \\
&=2E_3(0)-3^2\cdot5^2\binom21 E_1(0)E_1(0) \\
&=-112,
\end{aligned}
\end{equation}
since $E_2(0)=0.$ Therefore, (\ref{ex3-II}) and (\ref{ex4-II}) yields the result of Theorem \ref{re-thm}(1) when $p=2,$ $a=5$ and $b=3.$

Case (III): $p=3.$ Letting $p=3$ in (\ref{ex2}) and (\ref{ex1}), we have
$$T_3(5,3)=-\frac{1}{4},\quad T_3(3,5)=-\frac{1}{4},$$
so the left-hand side of Theorem \ref{re-thm}(2) reduces to
\begin{equation}\label{ex5}
\begin{aligned}
3^3T_3(5,3)-5^3 T_3(3,5)=\frac{49}{2}
\end{aligned}
\end{equation}
and if $p=3$, the right-hand side of Theorem \ref{re-thm}(2) equals to
\begin{equation}\label{ex6}
\begin{aligned}
(5^3-3^3)E_3(0)=\frac{49}{2}.
\end{aligned}
\end{equation}
Therefore, (\ref{ex5}) and (\ref{ex6}) yield the result of Theorem \ref{re-thm}(2) when $p=3,$ $a=5$ and $b=3.$

Case (IV): $p=4.$ Letting $p=4$ in (\ref{ex2}) and (\ref{ex1}), we have
$$T_4(5,3)=-\frac{44}{243},\quad T_4(3,5)=\frac{1348}{3125},$$
so the left-hand side of Theorem \ref{re-thm}(1) reduces to
\begin{equation}\label{ex3}
\begin{aligned}
5\cdot3^5T_4(5,3)+5^5\cdot3 T_4(3,5)=3824
\end{aligned}
\end{equation}
and if $p=4$, then the right-hand side of Theorem \ref{re-thm}(1) equals to
\begin{equation}\label{ex4}
\begin{aligned}
2E_5(0)-&5\cdot3\sum_{k=0}^3\binom 4k5^kE_k(0)3^{4-k}E_{4-k}(0) \\
&=2E_5(0)-5\cdot3(4\cdot5\cdot3^3+4\cdot5^3\cdot3)E_1(0)E_3(0) \\
&=3824,
\end{aligned}
\end{equation}
since $E_2(0)=E_4(0)=0.$ Therefore, (\ref{ex3}) and (\ref{ex4}) yield the result of Theorem \ref{re-thm}(1) when $p=4,$ $a=5$ and $b=3.$

Case (V): $p=5.$ By letting $p=5$ in (\ref{ex2}) and (\ref{ex1}), we have
$$T_5(5,3)=\frac{1}{2},\quad T_5(3,5)=\frac{1}{2},$$
so the left-hand side of Theorem \ref{re-thm}(2) reduces to
\begin{equation}\label{ex5-V}
\begin{aligned}
3^5T_5(5,3)-5^5 T_5(3,5)=-1441
\end{aligned}
\end{equation}
and if $p=5,$ then the right-hand side of Theorem \ref{re-thm}(2) for $p=5$ equals to
\begin{equation}\label{ex6-V}
\begin{aligned}
(5^5-3^5)E_5(0)=-1441.
\end{aligned}
\end{equation}
Therefore, (\ref{ex5-V}) and (\ref{ex6-V}) yield the result of Theorem \ref{re-thm}(2) when $p=5,$ $a=5$ and $b=3.$
\end{example}

\begin{example}
Set $a=7$ and $b=11$ in (\ref{def-DS}). From (\ref{ae-ft-p}), we have
\begin{equation}\label{ex7}
\begin{aligned}
T_p(7,11)&=2\biggl( \E_p\left(0\right)\E_1\left(0\right) -\E_p\left(\frac{7}{11}\right)\E_1\left(\frac{1}{11}\right)
+\E_p\left(\frac{14}{11}\right)\E_1\left(\frac{2}{11}\right) \\
&\quad
-\E_p\left(\frac{21}{11}\right)\E_1\left(\frac{3}{11}\right)+\E_p\left(\frac{28}{11}\right)\E_1\left(\frac{4}{11}\right) \\
&\quad-\E_p\left(\frac{35}{11}\right)\E_1\left(\frac{5}{11}\right)+\E_p\left(\frac{42}{11}\right)\E_1\left(\frac{6}{11}\right) \\
&\quad-\E_p\left(\frac{49}{11}\right)\E_1\left(\frac{7}{11}\right)+\E_p\left(\frac{56}{11}\right)\E_1\left(\frac{8}{11}\right) \\
&\quad-\E_p\left(\frac{63}{11}\right)\E_1\left(\frac{9}{11}\right)+\E_p\left(\frac{70}{11}\right)\E_1\left(\frac{10}{11}\right)
\biggl) \\
&=2\biggl( E_p\left(0\right)E_1\left(0\right) -E_p\left(\frac{7}{11}\right)E_1\left(\frac{1}{11}\right)-E_p\left(\frac{3}{11}\right)E_1\left(\frac{2}{11}\right) \\
&\quad+E_p\left(\frac{10}{11}\right)E_1\left(\frac{3}{11}\right)+E_p\left(\frac{6}{11}\right)E_1\left(\frac{4}{11}\right) \\
&\quad+E_p\left(\frac{2}{11}\right)E_1\left(\frac{5}{11}\right)-E_p\left(\frac{9}{11}\right)E_1\left(\frac{6}{11}\right) \\
&\quad-E_p\left(\frac{5}{11}\right)E_1\left(\frac{7}{11}\right)-E_p\left(\frac{1}{11}\right)E_1\left(\frac{8}{11}\right) \\
&\quad+E_p\left(\frac{8}{11}\right)E_1\left(\frac{9}{11}\right)+E_p\left(\frac{4}{11}\right)E_1\left(\frac{10}{11}\right)
\biggl)
\end{aligned}
\end{equation}
and
\begin{equation}\label{ex8}
\begin{aligned}
T_p(11,7)&=2\biggl( \E_p\left(0\right)\E_1\left(0\right)-\E_p\left(\frac{11}{7}\right)\E_1\left(\frac{1}{7}\right)
+\E_p\left(\frac{22}{7}\right)\E_1\left(\frac{2}{7}\right) \\
&\quad-\E_p\left(\frac{33}{7}\right)\E_1\left(\frac{3}{7}\right)+\E_p\left(\frac{44}{7}\right)\E_1\left(\frac{4}{7}\right) \\
&\quad-\E_p\left(\frac{55}{7}\right)\E_1\left(\frac{5}{7}\right)+\E_p\left(\frac{66}{7}\right)\E_1\left(\frac{6}{7}\right)
 \biggl)\\
&=2\biggl( E_p\left(0\right)E_1\left(0\right)
+E_p\left(\frac{4}{7}\right)E_1\left(\frac{1}{7}\right)-E_p\left(\frac{1}{7}\right)E_1\left(\frac{2}{7}\right) \\
&\quad-E_p\left(\frac{5}{7}\right)E_1\left(\frac{3}{7}\right)+E_p\left(\frac{2}{7}\right)E_1\left(\frac{4}{7}\right) \\
&\quad+E_p\left(\frac{6}{7}\right)E_1\left(\frac{5}{7}\right)-E_p\left(\frac{3}{7}\right)E_1\left(\frac{6}{7}\right)
\biggl).
\end{aligned}
\end{equation}

Case (I): $p=1.$ Letting $p=1$ in (\ref{ex7}) and (\ref{ex8}), we have
$$T_1(7,11)=\frac{1}{2},\quad T_1(11,7)=\frac{1}{2},$$
so the left-hand side of Theorem \ref{re-thm}(2) reduces to
\begin{equation}\label{ex11-I}
\begin{aligned}
11T_1(7,11)-7T_1(11,7)=2
\end{aligned}
\end{equation}
and if $p=1$, then the right-hand side of Theorem \ref{re-thm}(2)   equals to
\begin{equation}\label{ex12-I}
\begin{aligned}
(7-11)E_1(0)=2.
\end{aligned}
\end{equation}
Therefore, (\ref{ex11-I}) and (\ref{ex12-I})  yield the result of Theorem \ref{re-thm}(2) when $p=1,$ $a=7$ and $b=11.$

Case (II): $p=2.$ Letting $p=2$ in (\ref{ex7}) and (\ref{ex8}), we have
$$T_2(7,11)=-\frac{524}{1331},\quad T_2(11,7)=\frac{64}{343},$$
so the left-hand side of Theorem \ref{re-thm}(1) reduces to
\begin{equation}\label{ex9-II}
\begin{aligned}
7\cdot11^3T_2(7,11)+7^3\cdot11 T_2(7,11)= -2964
\end{aligned}
\end{equation}
and if $p=2$, then the right-hand side of Theorem \ref{re-thm}(1) equals to
\begin{equation}\label{ex10-II}
\begin{aligned}
2E_3(0)-&7\cdot11\sum_{k=0}^1\binom 2k11^kE_k(0)7^{2-k}E_{2-k}(0) \\
&=2E_3(0)-7^2\cdot11^2\binom21 E_1(0)E_1(0) \\
&= -2964 ,
\end{aligned}
\end{equation}
since $E_2(0)=0.$ Therefore, (\ref{ex9-II}) and (\ref{ex10-II})
reduce to yield the result of Theorem \ref{re-thm}(1) when $p=2,$ $a=7$ and $b=11.$

Case (III): $p=3.$ Letting $p=3$ in (\ref{ex7}) and (\ref{ex8}), we have
$$T_3(7,11)=-\frac{1}{4},\quad T_3(11,7)=-\frac{1}{4},$$
so the left-hand side of Theorem \ref{re-thm}(2) reduces to
\begin{equation}\label{ex11-III}
\begin{aligned}
11^3T_3(7,11)-7^3T_3(11,7)=-247
\end{aligned}
\end{equation}
and if $p=3$, then the right-hand side of Theorem \ref{re-thm}(2) equals to
\begin{equation}\label{ex12-III}
\begin{aligned}
(7^3-11^3)E_3(0)=-247.
\end{aligned}
\end{equation}
Therefore, (\ref{ex11-III}) and (\ref{ex12-III})  yield the result of Theorem \ref{re-thm}(2) when $p=3,$ $a=7$ and $b=11.$

Case (IV): $p=4.$ Letting even $p=4$ in (\ref{ex7}) and (\ref{ex8}), we have
$$T_4(7,11)=\frac{78532}{161051},\quad T_4(11,7)=-\frac{4160}{16807},$$
so the left-hand side of Theorem \ref{re-thm}(1) reduces to
\begin{equation}\label{ex9}
\begin{aligned}
7\cdot11^5T_4(7,11)+7^5\cdot11 T_4(7,11)=503964
\end{aligned}
\end{equation}
and if $p=3$, then the right-hand side of Theorem \ref{re-thm}(1) equals to
\begin{equation}\label{ex10}
\begin{aligned}
2E_5(0)-&7\cdot11\sum_{k=0}^3\binom 4k11^kE_k(0)7^{4-k}E_{4-k}(0) \\
&=2E_5(0)-7^2\cdot11^2\left(\binom41 7^2E_1(0)E_3(0)+\binom43 11^2 E_3(0)E_1(0)  \right) \\
&= 503964 ,
\end{aligned}
\end{equation}
since $E_2(0)=E_4(0)=0.$ Therefore, (\ref{ex9}) and (\ref{ex10}) yield the result of Theorem \ref{re-thm}(1) when $p=4,$ $a=7$ and $b=11.$

Case (II): $p=5.$ Letting $p=5$ in (\ref{ex7}) and (\ref{ex8}), we have
$$T_5(7,11)=\frac{1}{2},\quad T_5(11,7)=\frac{1}{2},$$
so the left-hand side of Theorem \ref{re-thm}(2) reduces to
\begin{equation}\label{ex11}
\begin{aligned}
11^5T_5(7,11)-7^5 T_5(11,7)=72122
\end{aligned}
\end{equation}
and if $p=5$, then the right-hand side of Theorem \ref{re-thm}(2) equals to
\begin{equation}\label{ex12}
\begin{aligned}
(7^5-11^5)E_5(0)=72122.
\end{aligned}
\end{equation}
Therefore, (\ref{ex11}) and (\ref{ex12}) yield the result of Theorem \ref{re-thm}(2) when $p=5,$ $a=7$ and $b=11.$
\end{example}

\bibliography{central}

\end{document}